\renewcommand*\@fnsymbol[1]{\the#1}
\theoremstyle{plain}
\newtheorem{theorem}{Theorem}[section]
\newtheorem{lemma}[theorem]{Lemma}
\newtheorem{proposition}[theorem]{Proposition}
\theoremstyle{definition}
\newtheorem{remark}[theorem]{Remark}
\newcommand{\cl}{\mathop{\rm cl}\nolimits}
\newcommand{\e}{\varepsilon}
\newcommand{\cF}{\mathcal F}
\newcommand{\cA}{\mathcal A}
\newcommand{\cP}{\mathcal P}
\newcommand{\probp}{\mathbb P}
\newcommand{\N}{\mathbb N}
\begin{document}

\title{
A simple characterization of tightness for\\
convex solid sets of positive random variables
}

\author{
\sc{Pablo Koch-Medina}\thanks{
Email: \texttt{pablo.koch@bf.uzh.ch}}
\,,
\sc{Cosimo Munari}\,\thanks{
Email: \texttt{cosimo.munari@bf.uzh.ch}}
\,,
\sc{Mario \v{S}iki\'c}\,\thanks{
Email: \texttt{mario.sikic@bf.uzh.ch}}
}
\affil{Center for Finance and Insurance, University of Zurich, Switzerland}

\date{April 2nd, 2017}

\maketitle

\abstract{
\noindent We show that for a convex solid set of positive random variables to be tight, or equivalently bounded in probability, it is necessary and sufficient that it is radially bounded, i.e.~that every ray passing through one of its elements eventually leaves the set. The result is motivated by problems arising in mathematical finance.
}

\bigskip

\noindent \textbf{Keywords}: random variables, convexity, solidity, tightness, radial boundedness.

\medskip

\noindent {\bf Mathematics Subject Classification}: 46A16, 46E30, 60A10


\parindent 0em \noindent

\section{Introduction}

In mathematical finance one has often to deal with sets of positive random variables that are {\em convex} and {\em solid}. The prototypical situations arise in the context of arbitrage pricing theory, see Delbaen and Schachermayer (1994), and in the context of utility maximization, see Kramkov and Schachermayer (1999), where the sets of interest consist of financial payoffs that are dominated, or superreplicated, by a given payoff. An important result in this direction was the bipolar theorem for convex solid sets of positive random variables established by Brannath and Schachermayer (1999) and later extended by Kupper and Svindland (2011) beyond the solid setting, which showed how to employ duality methods in spite of the lack of local convexity of the space of random variables. The combination of convexity and solidity has also been instrumental for the study of a variety of properties of sets of positive random variables inspired by problems from financial mathematics, namely numeraires, see Kardaras (2012) and Kardaras (2015), forward-convex convergence, see Kardaras and \v{Z}itkovi\'c (2013), and uniform integrability and local convexity, see Kardaras (2014).

\medskip

A central role in the previous contributions was played by the property of {\em tightness}, also called {\em boundedness in probability}. For convex sets of positive random variables that are closed (but not necessarily solid) this property is equivalent to {\em convex compactness}, see \v{Z}itkovi\'c (2009), which helps explain its ubiquity in duality and optimization problems involving positive random variables.

\medskip

The aim of this note is to provide a simple characterization of tightness for convex solid sets of positive random variables in terms of the property of {\em radial boundedness}, which amounts to requiring that any ray passing through some element of the given set eventually leaves the set. The result will automatically yield a characterization of convex compactness for convex solid sets of positive random variables that are closed. This characterization, which is new to the best of our knowledge, is appealing because radial boundedness is a property that is far easier to check than tightness or convex compactness.


\section{Notation and terminology}

We briefly recall some basic notions about sets of random variables. For more details we refer to the monograph by Aliprantis and Border (2006).

\medskip

Let $(\Omega,\cF,\probp)$ be a probability space. We denote by $L^0$ the set of all real-valued measurable functions on $\Omega$ modulo $\probp$-almost-sure equality. As usual, we will not distinguish between an equivalence class and any of its representatives. Each element of $L^0$ will be called a random variable.

\medskip

{\bf Vector structure}. The set $L^0$ is equipped with the canonical real vector space structure. A set $\cA\subset L^0$ is said to be {\em convex} if it contains the segment connecting any of its points, i.e.
\[
X,Y\in\cA, \ \lambda\in[0,1] \ \implies \ \lambda X+(1-\lambda)Y\in\cA.
\]

\smallskip

We say that $\cA$ is {\em radially bounded} whenever any ray passing through some of its points eventually leaves the set, i.e.
\[
X\in\cA\setminus\{0\} \ \implies \ \exists \lambda_X\in(0,\infty) \,:\, \lambda X\notin\cA, \ \forall \lambda\in(\lambda_X,\infty).
\]

\medskip

{\bf Order structure}. We equip the space $L^0$ with the lattice order defined by
\[
X\geq Y \ \iff \ \probp(X\geq Y)=1.
\]

\smallskip

We say that $X\in L^0$ is {\em positive} if $X\geq0$. The set of positive random variables is denoted by $L^0_+$. We say that $\cA\subset L^0$ is {\em solid} whenever
\[
X\in\cA, \ |Y|\leq|X| \ \implies \ Y\in\cA.
\]

\medskip

{\bf Topological structure}. We equip the space $L^0$ with the (metric) topology of convergence in probability given by
\[
X_n\to X \ \iff \ \probp(|X_n-X|>\e)\to0, \ \forall \e>0.
\]

\smallskip

The corresponding topology is induced, for instance, by the metric
\[
d(X,Y) = \int_\Omega\min(|X-Y|,1)d\probp.
\]
Recall that every convergent sequence in $L^0$ admits a subsequence that converges almost surely (to the same limit).

A set $\cA\subset L^0$ is said to be {\em tight} or {\em bounded (in probability)} if it is topologically bounded (note that every subset of $L^0$ is metrically bounded). This is equivalent to
\[
\forall \e>0 \ \exists M>0 \,:\, \sup_{X\in\cA}\probp(|X|>M)<\e.
\]

\smallskip

We say that $\cA$ is {\em closed (in probability)} if it contains the limit of any convergent sequence of its elements, i.e.
\[
(X_n)\subset\cA, \ X_n\to X \ \implies \ X\in\cA.
\]
The smallest closed set containing $\cA$ is called the {\em closure} of $\cA$ and is denoted by $\cl(\cA)$. Note that the closure of a solid, respectively tight, set is still solid, respectively tight.


\section{The main result}

Before we prove our announced characterization of tightness it is useful to state the following preliminary results. We start by showing that, when checking radial boundedness for a solid subset of $L^0_+$, we can restrict our attention to indicator functions. Here, for any $E\in\cF$ we denote by $1_E$ the element of $L^0$ that is equal to one on $E$ and is null on $\Omega\setminus E$.

\begin{lemma}
\label{lemma: rb for monotone}
A solid set $\cA\subset L^0_+$ is radially bounded if and only if
\begin{equation}
\label{radial boundedness property}
E\in\cF, \ \probp(E)>0 \ \implies \ \exists \lambda_E\in(0,\infty) \,:\, \lambda_E1_E\notin\cA.
\end{equation}
\end{lemma}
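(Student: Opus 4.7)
My plan is to prove both implications separately; essentially all the content lies in the \emph{if} direction.

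For the \emph{only if} direction, I would fix $E\in\cF$ with $\probp(E)>0$ and split into two cases. If $1_E\notin\cA$, then $\lambda_E=1$ trivially works. Otherwise $1_E\in\cA\setminus\{0\}$, and applying the definition of radial boundedness to $X=1_E$ yields some $\lambda_X\in(0,\infty)$ such that $\lambda 1_E\notin\cA$ for every $\lambda>\lambda_X$; any such $\lambda$ serves as $\lambda_E$.

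For the \emph{if} direction, I would fix an arbitrary $X\in\cA\setminus\{0\}$ and aim to produce the required threshold. Since $X\geq 0$ and $X\neq 0$ in $L^0$, continuity of $\probp$ along the increasing sequence $\{X>1/n\}\uparrow\{X>0\}$ provides some $\delta>0$ for which the event $E:=\{X>\delta\}$ satisfies $\probp(E)>0$. Applying the hypothesis \eqref{radial boundedness property} to this $E$ gives $\lambda_E\in(0,\infty)$ with $\lambda_E 1_E\notin\cA$, and the natural candidate for the threshold is $\lambda_X:=\lambda_E/\delta$. For any $\lambda>\lambda_X$ one has the pointwise chain $\lambda X\geq \lambda\delta 1_E\geq\lambda_E 1_E\geq 0$, so if $\lambda X$ were to lie in $\cA$, solidity (applied with $|\lambda_E 1_E|\leq|\lambda X|$) would force $\lambda_E 1_E\in\cA$, contradicting the choice of $\lambda_E$.

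I do not foresee any real obstacle: the whole argument is driven by the interplay between solidity and positivity, which lets one dominate the ray through an arbitrary nonzero element of $\cA$ by a ray through a scalar multiple of the indicator of $\{X>\delta\}$. The only step that genuinely uses $X\in L^0_+$ rather than just $X\in L^0$ is the selection of $\delta>0$ with $\probp(X>\delta)>0$, which is a standard monotone-continuity-of-measure observation.
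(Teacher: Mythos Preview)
Your proposal is correct and follows essentially the same approach as the paper: for the ``if'' direction you pick $\delta>0$ with $\probp(X>\delta)>0$, set $E=\{X>\delta\}$, define $\lambda_X=\lambda_E/\delta$, and use solidity together with the domination $\lambda X\geq\lambda_E 1_E$ to conclude, exactly as the paper does (with $\e$ in place of $\delta$). Your treatment of the ``only if'' direction merely spells out what the paper calls ``obvious''.
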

\begin{proof}
The ``only if'' implication is obvious. To prove the converse implication, assume that~\eqref{radial boundedness property} holds and take $X\in\cA\setminus\{0\}$ so that $\probp(X>\e)>0$ for a suitable $\e>0$. Now, set $E=\{X>\e\}\in\cF$ and take $\lambda_X=\lambda_E/\e$, where $\lambda_E\in(0,\infty)$ is as in~\eqref{radial boundedness property}. Then, for every $\lambda\in(\lambda_X,\infty)$ we have
\[
\lambda X \geq \lambda_X X = \frac{\lambda_E}{\e}X \geq \lambda_E1_E \notin\cA,
\]
so that $\lambda X\notin\cA$ by solidity. This proves that $\cA$ is radially bounded.
\end{proof}

\medskip

The second preliminary result shows that, while the closure of a radially bounded set need not be radially bounded, this is always true in the presence of solidity.

\begin{lemma}
\label{lemma: closed radially bounded}
Assume $\cA\subset L^0_+$ is convex and solid. If $\cA$ is radially bounded, then $\cl(\cA)$ is also radially bounded.
\end{lemma}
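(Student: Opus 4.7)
The plan is to reduce the question to indicator functions via Lemma~\ref{lemma: rb for monotone}, and then reach a contradiction by convex averaging of approximations of $n 1_E$ drawn from $\cA$. Since the closure of a solid set is solid, Lemma~\ref{lemma: rb for monotone} applies to $\cl(\cA)$, so it suffices to prove that for every $E\in\cF$ with $\probp(E)>0$ there exists $\lambda>0$ such that $\lambda 1_E\notin\cl(\cA)$. Assume, aiming for a contradiction, that there is some $E\in\cF$ with $\probp(E)>0$ such that $\lambda 1_E\in\cl(\cA)$ for every $\lambda>0$.

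For each $n\in\N$, use $n 1_E\in\cl(\cA)$ to choose $X_n\in\cA$ with $\probp(|X_n-n 1_E|>1)<2^{-n}$, and set $A_n:=\{|X_n-n 1_E|>1\}$. Since $\sum_n\probp(A_n)<\infty$, the first Borel--Cantelli lemma yields $\probp(\limsup_n A_n)=0$. The sets $F_k:=E\setminus\bigcup_{n\geq k}A_n$ therefore increase to $E\setminus\limsup_n A_n$, which has probability $\probp(E)>0$, so some $F_k$ (call it $F$) has positive probability. By construction, $X_n(\omega)\geq n-1$ for every $\omega\in F$ and every $n\geq k$.

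Convexity of $\cA$ gives $Y_N:=\tfrac{1}{N}\sum_{n=1}^N X_n\in\cA$. Since each $X_n\geq 0$ and, uniformly in $\omega\in F$,
\[
Y_N(\omega)\,\geq\,\frac{1}{N}\sum_{n=k}^N (n-1)\,=:\,c_N\ \longrightarrow\ \infty,
\]
one has $Y_N\geq c_N 1_F$ pointwise. Applying Lemma~\ref{lemma: rb for monotone} to the radially bounded set $\cA$ at the set $F$ produces $\lambda^*>0$ with $\lambda^* 1_F\notin\cA$. Taking $N$ large enough that $c_N\geq\lambda^*$ yields $\lambda^* 1_F\leq Y_N\in\cA$; by solidity of $\cA$ this forces $\lambda^* 1_F\in\cA$, the desired contradiction.

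The one delicate point is passing from the mere in-probability approximation of $n 1_E$ (which is all that membership in $\cl(\cA)$ provides) to a single set $F$ of positive probability on which the Cesaro averages diverge uniformly. The Borel--Cantelli step above is precisely what manufactures that uniformity, and without such a uniform pocket the averages could not be shown to dominate a fixed multiple of an indicator.
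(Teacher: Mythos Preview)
Your proof is correct and follows the same overall strategy as the paper: reduce to indicators via Lemma~\ref{lemma: rb for monotone}, assume the closure fails the indicator test at some $E$, and manufacture a single set $F\subset E$ of positive probability on which arbitrarily large multiples of $1_F$ are forced into $\cA$. The main technical difference lies in how the ``uniform pocket'' $F$ is extracted. The paper applies Egorov's theorem separately for each $m$ to obtain sets $E_m$ of nearly full measure on which approximants to $m1_E$ converge uniformly, and then takes $F=E\cap\bigcap_m E_m$; you instead pick a single approximant $X_n$ for each target $n1_E$ with summable error probability and invoke Borel--Cantelli. Both devices are short and natural, and your version arguably avoids the slight overhead of passing first to almost-sure convergence.

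One point worth noting: your detour through the Ces\`aro averages $Y_N$, and with it your use of convexity, is unnecessary. Once you have $X_n(\omega)\geq n-1$ for all $\omega\in F$ and all $n\geq k$, positivity of $X_n$ off $F$ gives $X_n\geq(n-1)1_F$, and solidity alone yields $(n-1)1_F\in\cA$ for every $n\geq k$, which already contradicts radial boundedness. The paper's proof likewise uses only solidity; the convexity hypothesis in the lemma statement is in fact not needed for this result.
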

\begin{proof}
Assume $\cl(\cA)$ is not radially bounded. Since $\cl(\cA)$ is solid, we find a suitable $E\in\cF$ such that $\probp(E)>0$ and $\lambda1_E\in\cl(\cA)$ for every $\lambda>0$ by the preceding lemma. Then, for any $m\in\N$ there exists a sequence $(X^m_n)\subset\cA$ converging almost surely to $m1_E$. Note that $(1_EX^m_n)$ also consists of elements of $\cA$ by solidity and converges almost surely to $m1_A$ as well. Hence, by Egorov, there exists a set $E_m\in\cF$ with
\[
\probp(\Omega\setminus E_m) < \frac{\probp(E)}{2}\frac{1}{2^m}
\]
and such that $(1_EX^m_n)$ converges uniformly to $m1_A$ on $E_m$. In particular, we can always find some $n\in\N$ such that
\[
1_EX^m_n \geq 1_{E\cap E_m}X^m_n \geq \bigg(m-\frac{1}{m+1}\bigg)1_{E\cap E_m},
\]
so that $(m-\frac{1}{m+1})1_{E\cap E_m}\in\cA$ by solidity. Now, we claim that the measurable set
\[
F = E\cap\bigcap_{m\in\N}E_m \in \cF
\]
satisfies $\probp(F)>0$. Indeed, since
\[
E\setminus F = \bigcup_{m\in\N}E\cap(\Omega\setminus E_m),
\]
we obtain
\[
\probp(F) \geq \probp(E)-\sum_{m\in\N}\probp(\Omega\setminus E_m) \geq \probp(E)-\frac{\probp(E)}{2} > 0\,.
\]
Finally, since $F\subset E$, it follows that $(m-\frac{1}{m+1})1_F\in\cA$ for all $m\in\N$ by solidity. However, this in contrast to $\cA$ being radially bounded, hence we must conclude that $\cl(\cA)$ has to be radially bounded itself.
\end{proof}

\medskip

\begin{remark}
\label{simple example}
The assumption of solidity in the above lemma cannot be dropped. To see this, let $\Omega=\{\omega_1,\omega_2\}$ and assume $\cF$ coincides with the power set of $\Omega$ and $\probp$ is given by $\probp(\omega_1)=p$ and $\probp(\omega_2)=1-p$ for some $p\in(0,1)$. Moreover, consider the set
\[
\cA = \{X\in L^0 \,; \ X(\omega_1)\in(0,1]\}.
\]
Clearly, $\cA$ is convex and radially bounded but not solid. However,
\[
\cl(\cA) = \{X\in L^0 \,; \ X(\omega_1)\in[0,1]\}
\]
is not radially bounded.\hfill$\qed$
\end{remark}

\medskip

We are now in a position to prove our main result, which establishes the equivalence of tightness and radial boundedness for convex solid subsets of $L^0_+$.

\begin{theorem}
\label{proposition: radial boundedness}
Assume $\cA\subset L^0_+$ is convex and solid. Then, $\cA$ is tight if and only it is radially bounded.
\end{theorem}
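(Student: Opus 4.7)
The plan is to treat the two implications separately. The ``tight $\Rightarrow$ radially bounded'' direction is immediate: for $X\in\cA\setminus\{0\}$ pick $\e>0$ with $p:=\probp(X>\e)>0$, and use tightness at the threshold $p/2$ to produce $M>0$ with $\probp(Y>M)<p/2$ for every $Y\in\cA$; then $\lambda X\in\cA$ is impossible once $\lambda>M/\e$, since that would force $\probp(\lambda X>M)\geq p$.

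The converse is the substantive direction, and I argue it by contraposition, aiming to contradict Lemma~\ref{lemma: closed radially bounded}. If $\cA$ is not tight, there exist $\e_0>0$, a sequence $(X_n)\subset\cA$, and numbers $M_n\to\infty$ with $\probp(X_n>M_n)\geq\e_0$. Let $E_n:=\{X_n>M_n\}$. Solidity applied to $X_n$ yields $c\cdot 1_{E_n}\in\cA$ for every $c\leq M_n$, because $c\cdot 1_{E_n}\leq X_n$ pointwise. The family $(1_{E_n})$ is dominated by $1$ and therefore uniformly integrable; by Dunford--Pettis a subsequence (still denoted $1_{E_n}$) converges weakly in $L^1$ to some $\xi$ with $0\leq\xi\leq 1$ and $\E[\xi]\geq\e_0$. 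Since $\E[\xi]>0$, we may fix $\delta_0>0$ so that $F:=\{\xi>\delta_0\}$ has positive probability.

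Now pick an arbitrary $\lambda>0$ and choose $K_0$ so large that $M_n\geq\lambda/\delta_0$ for all $n\geq K_0$. Then $(\lambda/\delta_0)\,1_{E_n}\in\cA$ for $n\geq K_0$ by solidity, and the rescaled tail still converges weakly in $L^1$ to $(\lambda/\delta_0)\xi$. Mazur's lemma produces convex combinations $V_j\in\cA$ of these rescaled tail elements that converge to $(\lambda/\delta_0)\xi$ in $L^1$, and hence in probability along a further subsequence, so $(\lambda/\delta_0)\xi\in\cl(\cA)$. Since $(\lambda/\delta_0)\xi\geq\lambda\cdot 1_F$ pointwise, solidity of $\cl(\cA)$ yields $\lambda 1_F\in\cl(\cA)$. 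As $F$ does not depend on $\lambda$, Lemma~\ref{lemma: rb for monotone} shows $\cl(\cA)$ is not radially bounded, contradicting Lemma~\ref{lemma: closed radially bounded}.

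The step I view as the main obstacle is extracting from the heterogeneous sets $(E_n)$ a single positive-probability set $F$ on which arbitrarily tall indicator multiples lie in $\cl(\cA)$. The density $\xi$ furnished by Dunford--Pettis provides the common support, while solidity is exploited twice: first to replace the original $X_n$ by the cleaner multiples $c\cdot 1_{E_n}$, and again at the end to pass from $(\lambda/\delta_0)\xi$ down to $\lambda 1_F$.
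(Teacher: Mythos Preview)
Your argument is correct. The forward implication matches the paper's essentially verbatim. For the converse you also argue by contraposition and, like the paper, reduce via Lemma~\ref{lemma: closed radially bounded} to exhibiting a fixed set $F$ of positive probability with $\lambda 1_F\in\cl(\cA)$ for all $\lambda>0$. The difference lies in how that set is manufactured. The paper chooses the thresholds $M_n=2^n$, sets $E=\limsup_n E_n$, and uses the explicit geometric weights $2^{-(k-n(m))}$ to build a convex combination in $\cA$ dominating $m\,1_{\bigcup_{k>n(m)}E_k}$; closedness and solidity then give $m1_E\in\cA$ directly. Your route replaces this explicit combinatorics with functional-analytic machinery: Dunford--Pettis (plus Eberlein--\v{S}mulian, implicitly) extracts a weak $L^1$-limit $\xi$ of the indicators, Mazur's lemma supplies the convex combinations, and $F=\{\xi>\delta_0\}$ plays the role of the paper's $\limsup$ set. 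What your approach buys is conceptual clarity---the weak limit $\xi$ is a natural ``averaged support'' of the $E_n$, and Mazur hides the bookkeeping of the convex weights---at the price of invoking more theory. The paper's construction is more elementary and entirely self-contained, needing nothing beyond reverse Fatou and basic measure theory, which is arguably more in keeping with the $L^0$ setting where no norm structure is assumed.
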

\begin{proof}
To prove the ``only if'' implication, assume that $\cA$ is not radially bounded so that we find $X\in\cA\setminus\{0\}$ and an increasing divergent sequence $(\lambda_n)\subset(0,\infty)$ with $\lambda_n X\in\cA$ for all $n\in\N$. Since $X$ is nonzero, there exists $\delta\in(0,\infty)$ for which $E=\{X>\delta\}\in\cF$ satisfies $\probp(E)>0$. Now, take an arbitrary $M\in(0,\infty)$ and choose $n\in\N$ such that $\lambda_n>\frac{M}{\delta}$. Then, we easily see that
\[
\probp(\lambda_n X>M) \geq \probp(X>\delta).
\]
This shows that $\cA$ is not tight.

\smallskip

To establish the ``if'' implication, assume that $\cA$ is radially bounded but not tight. In view of Lemma~\ref{lemma: closed radially bounded}, we may assume without loss of generality that $\cA$ is closed. Since $\cA$ is not tight, there exist $\e>0$ and a sequence $(X_n)\subset\cA$ such that $E_n=\{X_n>2^n\}\in\cF$ satisfies $\probp(E_n)>\e$ for all $n\in\N$. Note that $X_n\geq2^n1_{E_n}$ for every $n\in\N$ and, hence, the sequence $(2^n1_{E_n})$ is also contained in $\cA$ by solidity. Now, consider the set
\[
E = \bigcap_{m\in\N}\bigcup_{n\geq m}E_n \in \cF.
\]
We claim that
\begin{equation}
\label{eq: main result auxiliary}
(m1_E) \subset \cA,
\end{equation}
which together with
\[
\probp(E) \ge \limsup_{n\rightarrow\infty}\probp(E_n) > \e
\]
will be in conflict with property~\eqref{radial boundedness property} and will therefore imply that the set $\cA$ must be tight.

\smallskip

To that effect, fix an arbitrary $m\in\N$ and choose $n(m)\in\N$ such that $2^{n(m)}>m$. Note that for any $N>n(m)$ we have
\[
\sum_{k=n(m)+1}^N\frac{1}{2^{k-n(m)}}2^k1_{E_k} =
2^{n(m)}\sum_{k=n(m)+1}^N1_{E_k} \ge
m1_{\bigcup_{k=n(m)+1}^NE_k}.
\]
Since $\cA$ contains $0$ by solidity, the left-hand side above belongs to $\cA$ for every $N>n(m)$ by convexity. As a result, the solidity of $\cA$ implies that the right-hand side also belongs to $\cA$ for every $N>n(m)$. Noting that
\[
m1_{\bigcup_{k=n(m)+1}^NE_k} \to m1_{\bigcup_{k\geq n(m)+1}E_k}
\]
as $N\to\infty$, we infer from the closedness of $\cA$ that the above limit is also an element of $\cA$. Since $E\subset\bigcup_{k\geq n(m)+1}E_k$, we can use solidity once more to obtain $m1_E\in\cA$. This, in view of the fact that $m$ was arbitrary, yields~\eqref{eq: main result auxiliary} and concludes the proof.
\end{proof}

\medskip

\begin{remark}
It is clear from the proof of Theorem~\ref{proposition: radial boundedness} that tightness always implies radial boundedness, regardless of whether the set is convex or solid. Hence, the ``only if'' implication in Theorem~\ref{proposition: radial boundedness} remains true for an arbitrary subset of $L^0$.\hfill$\qed$
\end{remark}

\medskip

\begin{remark}
The ``if'' implication in Theorem~\ref{proposition: radial boundedness} is not true without assuming convexity and solidity.

\smallskip

{\em Solidity}. To show that solidity is necessary, note that Remark~\ref{simple example} provides an example of a non-tight subset of $L^0_+$ that is convex and radially bounded but not solid. The example can be trivially extended to any probability space.

\smallskip

{\em Convexity}. To show that convexity is also necessary, assume that $\Omega$ contains no atom, i.e.~no set $A\in\cF$ with $\probp(A)>0$ and such that any $E\in\cF$ that is contained in $A$ satisfies either $\probp(E)=0$ or $\probp(E)=\probp(A)$. In this case, we find random variables having any prescribed distribution; see F\"{o}llmer and Schied (2011). Now, consider a sequence $(X_n)\subset L^0_+$ of independent and identically distributed random variables with $\probp(X_1=0)=\probp(X_1=1)=\frac{1}{2}$ and define the set
\[
\cA = \{X\in L^0_+ \,; \ \exists n\in\N \,:\, nX_n\geq X\}.
\]
The set $\cA$ is clearly solid but not convex. Moreover, it is not tight since for any $M>0$ we have $\probp(nX_n\geq M)=\frac{1}{2}$ as soon as $n\geq M$. We now show that $\cA$ is radially bounded. To this end, assume by way of contradiction that $\cA$ is not radially bounded so that we find $E\in\cF$ satisfying $\probp(E)>0$ and $m1_E\in\cA$ for every $m\in\N$ by Lemma~\ref{lemma: rb for monotone}. By definition of $\cA$, for each $m\in\N$ there exists $n_m\in\N$ such that $n_m X_{n_m}\geq m1_E$. Since the elements of the sequence $(X_n)$ have Bernoulli distribution, we must have $E\subset\bigcap_{m\in\N}\{X_{n_m}=1\}$. Moreover, noting that $n_m\geq m$, we infer from independence that
\[
\probp\bigg(\bigcap_{m\in\N}\{X_{n_m}=1\}\bigg) = \prod_{m\in\N}\probp(X_{n_m}=1) = 0.
\]
This implies that $\probp(E)=0$, in contrast to our initial assumption on $E$. As a result, we conclude that $\cA$ must be radially bounded.\hfill$\qed$
\end{remark}

\medskip

In the context of a purely atomic probability space the convexity assumption in the preceding theorem can be omitted. Recall that $(\Omega,\cF,\probp)$ is said to be purely atomic if any set $E\in\cF$ with $\probp(E)>0$ contains an atom $A\in\cF$ satisfying $\probp(A)>0$. For more details about purely atomic spaces we refer to Bogachev (2007).

\begin{proposition}
\label{prop: atomic spaces}
Let $(\Omega,\cF,\probp)$ be purely atomic and assume $\cA\subset L^0_+$ is solid. Then, $\cA$ is tight if and only if it is radially bounded.
\end{proposition}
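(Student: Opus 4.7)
The ``only if'' implication is immediate from the remark following Theorem~\ref{proposition: radial boundedness}, which notes that tightness always implies radial boundedness regardless of convexity or solidity. My plan is therefore to focus on the ``if'' direction, where the obstacle is that the argument in Theorem~\ref{proposition: radial boundedness} used convexity to form a convex combination $\sum_k 2^{-(k-n(m))} 2^k 1_{E_k}$; without convexity this route is closed, and I must exploit the atomic structure instead.

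The starting point mirrors Theorem~\ref{proposition: radial boundedness}: assume that $\cA$ is radially bounded but not tight, and extract $\e>0$ and a sequence $(X_n)\subset\cA$ such that $E_n=\{X_n>2^n\}$ satisfies $\probp(E_n)>\e$ for every $n$. The key observation is that in a purely atomic probability space there exist at most countably many atoms $(A_k)_{k\in\N}$ of positive probability whose union is all of $\Omega$ up to a $\probp$-null set, and every measurable function is $\probp$-almost surely constant on each atom. Denote by $x_{n,k}$ the (essentially unique) value of $X_n$ on $A_k$.

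The main step is to prove that there exists some atom $A_{k_0}$ along which the sequence $(x_{n,k_0})_{n\in\N}$ is unbounded. I would argue by contradiction: if $x_{n,k}\le M_k$ for all $n$, then an atom $A_k$ can lie in $E_n$ only when $x_{n,k}>2^n$, which forces $M_k>2^n$. Hence
\[
\e < \probp(E_n) \le \sum_{k\,:\,M_k>2^n}\probp(A_k),
\]
and the right-hand side tends to $0$ as $n\to\infty$ (the sets $\{k:M_k>2^n\}$ are decreasing in $n$ with empty intersection, and $\sum_k \probp(A_k) \le 1$), producing the desired contradiction.

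Once such an atom $A_{k_0}$ is in hand, the conclusion is immediate: for every $M>0$ choose $n$ with $x_{n,k_0}\ge M$; then $X_n\ge M 1_{A_{k_0}}$, so $M 1_{A_{k_0}}\in\cA$ by solidity. Since $\probp(A_{k_0})>0$, Lemma~\ref{lemma: rb for monotone} is violated, contradicting the radial boundedness of $\cA$. The main obstacle is really the ``one atom carries the failure'' step above; the rest is bookkeeping with the solidity assumption.
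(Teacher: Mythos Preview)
Your argument is correct and follows the same architecture as the paper's proof: both isolate a single atom $A_{k_0}$ on which the $X_n$'s take arbitrarily large values, then use solidity to place $M1_{A_{k_0}}\in\cA$ for every $M>0$, contradicting Lemma~\ref{lemma: rb for monotone}. The only difference is in how the bad atom is found: the paper fixes finitely many atoms $A_1,\dots,A_N$ with $\sum_{k=1}^N\probp(A_k)>1-\e$ and uses a pigeonhole argument (some $A_k$ must meet infinitely many of the $E_n$), whereas you argue the contrapositive via the tail-sum estimate $\probp(E_n)\le\sum_{k:M_k>2^n}\probp(A_k)\to0$; the two devices are equally short and interchangeable.
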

\begin{proof}
The ``only if'' implication can be established as above. To prove the ``if'' implication, assume that $\cA$ is radially bounded. If $\cA$ is not tight, then we find $\e\in(0,1)$ such that for any $n\in\N$ there exists a random variable $X_n\in\cA$ such that $\probp(X_n>n)\geq\e$. By assumption, there exists a measurable partition $\cP$ of $\Omega$ consisting of atoms. Since any probability space admits at most countably many atoms, $\cP$ contains at most countably many elements. Denote by $N\in\N$ some index for which we find atoms $A_1,\dots,A_N\in\cP$ satisfying
\[
\sum_{k=1}^N\probp(A_k)>1-\e.
\]
Now, fix $n\in\N$ and define $E_n=\{X_n>n\}\in\cF$. Then, we must find $k_{n}\in\{1,\dots,N\}$ such that $\probp(E_n\cap A_{k_n})>0$, for otherwise
\[
\e \leq \probp(E_n) \leq 1-\sum_{k=1}^N\probp(A_k) < \e.
\]
Without loss of generality we may assume that $k_n=k$ for all $n\in\N$, where $k$ is a given element of $\{1,\dots,N\}$. As a result, for any $n\in\N$ it follows that
\[
\probp(E_n\cap A_{k_n}) = \probp(E_n\cap A_k) = \probp(A_k)
\]
and therefore
\[
X_n \geq n1_{E_n\cap A_{k_n}} = n1_{A_k},
\]

\smallskip

where we used that $A_k$ is an atom. This yields $n1_{A_k}\in\cA$ for all $n\in\N$ by solidity, which is in contrast to radial boundedness.
\end{proof}


\end{document}